\documentclass[10pt]{amsart}
\usepackage{amsmath,amssymb}
\newtheorem{bigthm}{Theorem}
\newtheorem{thm}{Theorem}[section]

\newtheorem{lem}{Lemma}[section]
\newtheorem{defn}{Definition}[section]

\numberwithin{equation}{section}

\setlength{\textheight}{43pc}
\setlength{\textwidth}{28pc}

\title{Pointwise Convergence of Ergodic Averages in Orlicz Spaces}
\author{Andrew Parrish}
\subjclass{37A45, 47A35; 46E30}

\begin{document} 

\maketitle
\begin{abstract}
	We construct a sequence ${a_n}$ such that for any aperiodic measure-preserving system $(X, \Sigma, m, T)$ the ergodic averages 
	\begin{equation*}
	 A_Nf(x) = \frac{1}{N} \sum_{n=1}^N f(T^{a_n}x)
	\end{equation*}
	converge a.e. for all $f$ in $L \log \log (L)$ but fail to have a finite limit for an $f \in L^1$.  In fact, we show that for each Orlicz space properly contained in $L^1$ there is a sequence along which the ergodic averages converge for functions in the Orlicz space, but diverge for all $f \in L^1$. This extends the work of K. Reinhold, who, building on the work of A. Bellow, constructed a sequence for which the averages $A_N f(x)$ converge a.e. for every $f \in L^p$, $p >q \geq 1$, but diverge for some $f \in L^q$.  Our method, introduced by Bellow and extended by Reinhold and M. Wierdl, is perturbation.
\end{abstract}

\tableofcontents

\section{Introduction and Preliminaries}
In this paper, we give a method for constructing sequences along which ergodic averages converge a.e. for functions in a certain Orlicz space $L \phi(L)$, yet diverge for a function in $L^1$.  All measure-preserving systems mentioned should be understood to be aperiodic (free) and of finite (probability) measure. 

If $A$ is a set of integers, $|A|$ denotes the cardinality of $A$ and $A(N) = A \cap [1, N)$.  $\{a_n\}=S$ will always denote an increasing sequence of positive integers.

\begin{defn}
 If $F$ is a function space, then $S$ is \textit{universally $F$-good} iff the sequence of averages 
 	 \begin{equation*}
		A_N[S,f](x)= A_Nf(x) = \frac{1}{N} \sum_{n=1}^N f(T^{a_n}x)
	 \end{equation*}
converges a.e. for every $f \in F$ for all measure-preserving systems.

$S$ is \textit{universally $F$-bad}  if there is an $f \in F$ for which the limit fails to exist for all $x$ in a set of positive measure for all measure-preserving systems. 
\end{defn}

\begin{defn}
We say that $S$ is \textit{universally $\infty$-sweeping out in $F$ } if for all measure-preserving systems there exists an $f \in F$ such that
	\begin{equation*}
		 \sup_N \frac{1}{N} \sum_{n=1}^N f(T^{a_n}x) = \infty.
	 \end{equation*}
\end{defn}

The Pointwise Ergodic Theorem shows that the natural numbers are universally $L^1$-good.  The existence of sequences of zero density that are universally $L^1$-good was proven in \cite{Bellow1984}.  In \cite{Bourgain1989Arithmetic}, it was shown that the sequence of squares is universally $L^p$-good, for $p>1$.  In \cite{Wierdl1988}, we see that the sequence of primes are universally $L^p$-good for $p>1$, as well. Recently it has been shown that the squares (\cite{Buczolich2008squares}) and primes (\cite{LaVictoire2009}) are $L^1$-bad. More may be found in \cite{Boshernitzan1996}: for example, if $[x]$ is the largest integer less than or equal to $x$,  $\{ [n^2 \log\log n] \}$  is $L^p$-good for $p>1$, and $\{[\sqrt{n} \log (\sqrt{n})]\}$ is $L^1$-good. 

In \cite{Bellow1989Perturb}, Bellow constructs a universally $L^p$-good sequence that is universally $L^q$-bad for $1 \leq q < p$ and any $1 < p < \infty$.  Using similar methods, Reinhold \cite{Reinhold1994} showed that there is a sequence which is universally $L^p$-good for $p > q \geq 1$ but universally $L^q$-bad and constructed sequences which are $L^q$-bad for all $q < \infty$ but good in $L^\infty$.  Our proof will use same method, perturbation, but will build on a different approach, explored in \cite{Wierdl1998Perturb}.

\begin{defn}
 Let $S$ be a strictly increasing sequence of positive integers.  A sequence $\Delta$ is a perturbation of $S$ iff
 \begin{equation*}
  \lim_{N \rightarrow \infty} \frac{|\{(\Delta \backslash S) \cup (S \backslash \Delta) \}(N)|}{|S(N)|} = 0.
 \end{equation*}
\end{defn}

Suppose $\phi: \mathbb{R} \rightarrow \mathbb{R}^+$ is strictly increasing, unbounded, and $\phi(x) = 1$ when $x \leq 1$.  For a probability space $(X, \Sigma, m)$, we define
\begin{equation*}
 L \phi (L) = \left\{ f: X \rightarrow \mathbb{R} \, \left|   \, \int_X |f(x)| \, \phi(|f(x)|) \, dx < \infty \right. \right\}.
\end{equation*}

Notice that $|x| \phi(|x|)$ is a Young's function; $L \phi(L)$ is therefore an Orlicz space.  Conversely, if $X$ is a finite measure space, it can be shown that for every Orlicz space $L^\Phi (X)$ containing $L^p$ for all $p>1$ and contained in $L^1(X)$, there is a $\phi$ so that $L^\Phi = L \phi(L)$ \cite{RaoRen}. Similarly, if $1 < q <\infty$, an Orlicz space containing $L^q$ and contained in $L^1$ is equivalent to 
\begin{equation*}
 \frac{L^q}{\phi \left( L \right)} = \left\{f : \int_X \frac{|f|^q}{\phi \left( |f|\right)} \, dm < \infty \right\}
\end{equation*}
for some strictly increasing, unbounded $ \phi : \mathbb{R} \rightarrow \mathbb{R}^+$ with $\phi(x)=1$ for $x \leq 1$, so long as $\frac{|x|^{q-1}}{\phi(x)} \rightarrow \infty$ as $x \rightarrow \infty$.  

We will use this notation, with $L \phi(L)$ or $\frac{L^q}{\phi \left( L \right)}$ representing our Orlicz space rather than the traditional $L^\Phi (X)$, for two reasons: first, because the variation on Yano's Extrapolation Theorem in Section \ref{YanoSec} will be stated in terms of $\phi$, and second, because $\phi$ plays an important role in Lemma \ref{notasgood}. Using this notation, our results may be written as follows.

\begin{bigthm}\label{prebig}
 Suppose $ \phi : \mathbb{R} \rightarrow \mathbb{R}^+$ is strictly increasing, unbounded, $\phi(x)=1$ for $x \leq 1$, and $\phi(x) << |x|^q$.  Let $S$ be a zero-density sequence that is universally $\frac{L^q}{\phi \left( L \right)}$-good, $1 < q < \infty$. Then there exists a perturbation of $S$ that is universally $L^q$-good but universally $\infty$-sweeping out in $\frac{L^q}{\phi \left( L \right)}$.
\end{bigthm}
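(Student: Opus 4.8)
The strategy is perturbation in the sense defined above: I would build the target sequence $\Delta$ by replacing, in carefully chosen sparse blocks, the terms of $S$ with terms from a fixed "bad" building block — a finite configuration of integers whose associated averaging operator has large weak-type behavior on $L^q/\phi(L)$ but which, because the blocks are placed far enough apart and occupy a vanishing fraction of $[1,N)$, does not destroy the $L^q$-convergence inherited from $S$. Concretely, I would fix a sequence of scales $N_k \to \infty$ growing fast (lacunary or faster), and on the $k$-th block I would substitute an arithmetic-progression-like or translated pattern $P_k$ of length $\asymp |S(N_{k+1})| - |S(N_k)|$ chosen so that the corresponding partial average, applied to a suitable indicator-type function $g_k$, is bounded below by something tending to infinity. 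The perturbation condition $|\{(\Delta\setminus S)\cup(S\setminus\Delta)\}(N)|/|S(N)| \to 0$ must be arranged by making the total number of altered terms up to $N_k$ be $o(|S(N_k)|)$; this forces the bad blocks to be sparse, which is exactly what one also needs for the positive (good) half.

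For the positive half — that $\Delta$ is universally $L^q$-good — I would argue that $A_N[\Delta,f]$ differs from $A_N[S,f]$ by an error term supported on the union of altered blocks, whose $\ell^1$-normalized contribution is $O(\|f\|_\infty \cdot (\text{altered fraction}))$ on $L^\infty$, and then use a maximal-inequality plus density argument: since $S$ is universally $L^q$-good and the symmetric difference has zero density, the maximal function of the difference is controlled, and a.e. convergence transfers from a dense subclass (say $L^\infty$) to all of $L^q$. The only subtlety is that $L^q$ is strictly larger than the Orlicz space where $S$ is assumed good, so I cannot simply quote $S$'s convergence on $L^q$; instead I must use that the perturbation is designed to be $L^q$-good, which is really a statement about the inserted blocks being benign on $L^q$ (here the hypothesis $\phi(x) \ll |x|^q$, i.e. $L^q \subsetneq L^q/\phi(L)$ strictly, and the genuine gap between $q$ and the Orlicz index, is what gives room). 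This is where the variation on Yano's Extrapolation Theorem from Section~\ref{YanoSec} enters: a uniform weak-$(q,q)$-type bound for the block operators, together with the extrapolation, upgrades to the maximal inequality on $L^q$ needed for a.e. convergence.

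For the negative half — that $\Delta$ is universally $\infty$-sweeping out in $L^q/\phi(L)$ — I would, given an arbitrary aperiodic system $(X,\Sigma,m,T)$, use aperiodicity (Rokhlin towers) to find, for each $k$, a set $E_k$ of small measure on which $T^{P_k}$ essentially shifts through a tower, so that $\frac{1}{|P_k|}\sum_{n\in P_k} \mathbf{1}_{E_k}(T^n x)$ is $\asymp 1$ on a set of measure $\gg m(E_k)\cdot |P_k|/(\text{tower height})$, hence $\sup_N A_N[\Delta, \sum_k c_k \mathbf{1}_{E_k}](x) = \infty$ on a positive-measure set once the $c_k$ and $m(E_k)$ are tuned so that $\sum_k c_k^{?}\,$-type series controlling the $L^q/\phi(L)$-norm converges while the averages blow up. Precisely, I would choose $m(E_k)$ decaying and $c_k$ growing so that $\int |f|^q/\phi(|f|)\,dm = \sum_k \frac{c_k^q}{\phi(c_k)} m(E_k) < \infty$ (possible exactly because $|x|^q/\phi(|x|) \to \infty$, so we get extra summability margin compared to $L^q$), yet the $k$-th average at points in the relevant part of the tower is $\gtrsim c_k \cdot (\text{density of } P_k \text{ in its window})$, which we keep bounded below. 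Stacking these on disjoint-in-the-relevant-sense pieces gives the unbounded supremum a.e. on a fixed positive-measure set.

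The main obstacle I expect is the simultaneous tuning in the third step: one must choose the block lengths $|P_k|$, the scales $N_k$, the tower heights, the sets $E_k$, and the coefficients $c_k$ so that (i) the altered fraction is $o(1)$ (needed for the perturbation condition and the positive half), (ii) $f = \sum_k c_k\mathbf{1}_{E_k} \in L^q/\phi(L)$ but the blow-up still occurs (needs $|x|^q/\phi(|x|)\to\infty$ to buy room, but this room is finite and must be budgeted against the sweeping-out lower bound), and (iii) the sweeping-out lower bound on each block is genuinely $\to\infty$ and occurs on a common positive-measure set independent of $k$ — this last point requires the Rokhlin towers for different $k$ to be nested or otherwise compatible, which is a standard but delicate construction. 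Getting all four parameter families to cohere is the technical heart; the rest is routine maximal-inequality and density bookkeeping.
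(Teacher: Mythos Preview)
Your overall architecture --- insert sparse ``bad'' blocks into $S$ so that the perturbation fraction vanishes, then verify separately that the result is $L^q$-good and $\infty$-sweeping out in $L^q/\phi(L)$ --- matches the paper's. But two points in your plan are off, and one of them is a genuine error.

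First, you have the inclusion backwards. You write that ``$L^q$ is strictly larger than the Orlicz space where $S$ is assumed good,'' and therefore you cannot quote $S$'s convergence on $L^q$. In fact $\phi(t)\geq 1$ for all $t$, so $|f|^q/\phi(|f|)\leq |f|^q$ pointwise and hence $L^q\subset L^q/\phi(L)$ strictly. The paper says exactly this: ``Since $S$ is already universally $L^q/\phi(L)$ (and hence $L^q$) good.'' So for the positive half there is no subtlety at the level of $S$: one only has to show that the \emph{added} terms contribute an average tending to $0$ a.e.\ for every $f\in L^q$. The paper does this by a completely elementary computation: with the block parameters $M(u)=\lfloor (\phi^{-1}(u^3))^q/u^3\rfloor$ and $R(u)=u^{1/q}/\phi^{-1}(u^3)$, one checks $\sum_u M(u)R(u)^q<\infty$, which gives $\sum_k\bigl\|\tfrac{1}{|S(n_k)|}\sum_{m\in\Delta(2n_k)\setminus S}f\circ T^m\bigr\|_{L^q}^q<\infty$ by the triangle inequality alone. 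No maximal inequality, no extrapolation. Your invocation of Yano's theorem here is misplaced; the paper reserves Yano for Theorem~B, where the target space really is $L^1$ and one must extrapolate from $L^p$ bounds as $p\downarrow 1$.

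Second, for the negative half your Rokhlin-tower construction in each individual system would work in principle, but the paper instead proves a transference lemma (Lemma~\ref{badseq}): it suffices to exhibit, on $\mathbb{Z}$, a function $F$ with $D(\Phi(F))\leq 1$ whose averages along $\Delta$ exceed any prescribed $K$ on a set of density $\geq 1-\varepsilon$. This is done with a single periodic $F$ (period $M(u)$, value $\phi^{-1}(u^3)$ on the zero residue class), and the inserted blocks are precisely residue classes mod $M(u)$, so the lower bound $\tfrac14 u^{1/q}$ falls out in one line. This bypasses your ``simultaneous tuning'' obstacle entirely: there is no nesting of towers to arrange, because the universality is handled once by the transference lemma rather than system-by-system.
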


\begin{bigthm}\label{big}
 Let $S$ be a zero-density sequence that is universally $L^1$-good. Then for any strictly increasing, unbounded $\phi: \mathbb{R} \rightarrow \mathbb{R}^+$, with $\phi(x)=1$ for $x \leq 1$, there exists a perturbation of $S$ that is universally $L \phi (L)$-good but universally $\infty$-sweeping out in $L^1$.
\end{bigthm}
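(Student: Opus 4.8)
The plan is to obtain Theorem~\ref{big} from Theorem~\ref{prebig} by an extrapolation argument, so the first step is to reduce the $L\phi(L)$ statement to a countable family of $L^q$-type statements. Given $\phi$ as in Theorem~\ref{big}, I would fix a sequence $q_k \downarrow 1$, say $q_k = 1 + 1/k$, and choose auxiliary functions $\psi_k$ so that $L\phi(L) \subseteq L^{q_k}/\psi_k(L)$ for every $k$ while $\psi_k(x) \ll |x|^{q_k}$; the point is that $L\phi(L)$, being properly contained in $L^1$, embeds in each of these slightly-better-than-$L^1$ Orlicz spaces, and one can arrange the $\psi_k$ explicitly from $\phi$ (e.g. $\psi_k(x) \approx \min(\phi(x), |x|^{q_k - 1}/\log x)$, truncated to equal $1$ below $1$). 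The variation on Yano's Extrapolation Theorem promised in Section~\ref{YanoSec} should be exactly the tool that lets one pass from maximal inequalities on the spaces $L^{q_k}/\psi_k(L)$ to an a.e.\ convergence statement on $L\phi(L)$; I would invoke it in that direction.

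Next I would run the perturbation construction. Since $S$ is zero-density and universally $L^1$-good, it is in particular universally $L^{q_k}/\psi_k(L)$-good for each $k$ (that space sits between $L^1$ and $L^{q_k}$). Applying Theorem~\ref{prebig} with $q = q_k$ and $\phi = \psi_k$ would, for each $k$ separately, produce a perturbation of $S$ that is universally $L^{q_k}$-good yet universally $\infty$-sweeping out in $L^{q_k}/\psi_k(L)$. The real work is to perform these perturbations \emph{simultaneously} on longer and longer blocks: I would partition $\mathbb{N}$ into consecutive intervals $I_1 < I_2 < \cdots$ with $|I_j|/|\,[1,\max I_j]\,| \to 0$ appropriately, and on block $I_j$ install the modification coming from the level-$k(j)$ construction of Theorem~\ref{prebig}, where $k(j) \to \infty$ slowly. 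Because each individual modification changes only a zero-density set of indices and the blocks are sparse, the union of all the changes is still a perturbation of $S$; one checks the perturbation condition by summing the (small) densities block by block. The resulting sequence $\Delta$ is the candidate.

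Then I verify the two required properties of $\Delta$. For universal $L^1$-goodness I do not have a clean statement, so instead I should aim only for what the theorem asks for in the other direction and for $L\phi(L)$-goodness use the maximal inequalities: on each block the averages agree (up to a vanishing error) with a piece of a sequence that is universally $L^{q_k}$-good, hence a.e.\ bounded for $f \in L\phi(L) \subseteq L^{q_k}$, and assembling these with the Yano-type extrapolation of Section~\ref{YanoSec} gives a maximal inequality for $A_N[\Delta, \cdot]$ on $L\phi(L)$ strong enough to conclude a.e.\ convergence (convergence on a dense class plus the maximal inequality is the standard Banach principle argument; the dense class can be taken to be $L^\infty$, where convergence is inherited from the good behaviour on blocks and the perturbation being small). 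For the $\infty$-sweeping-out in $L^1$: given any aperiodic system, Theorem~\ref{prebig} supplies, for each $k$, a function $g_k$ in $L^{q_k}/\psi_k(L) \subseteq L^1$ whose block-$k$ partial averages are unbounded; I would splice these together --- using the Conze/Rokhlin-tower machinery implicit in such constructions to place disjointly supported bumps realizing each $g_k$'s bad behaviour on the block $I_j$ with $k(j) = k$ --- into a single $f \in L^1$ (controlling only the $L^1$ norm, which is why we land in $L^1$ rather than in any smaller Orlicz space) for which $\sup_N A_N[\Delta, f] = \infty$ a.e.

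The main obstacle I anticipate is the \emph{simultaneous, uniform-over-all-systems} bookkeeping in the last two steps: Theorem~\ref{prebig} is applied countably many times with $q_k \to 1$, and I must interleave the block constructions so that (i) the perturbation condition survives the infinite union, (ii) the maximal inequality constants coming from the $L^{q_k}$ side (which blow up as $q_k \to 1$, like $1/(q_k-1)$) are tamed by the extrapolation exactly when paired with the gain from $L\phi(L) \subsetneq L^1$, and (iii) the divergent witness $f$ can be built once and for all, not system-by-system in a way that breaks universality. Getting the rates $k(j)$, the block lengths $|I_j|$, and the $\psi_k$ to be mutually compatible is where the care goes; the rest is assembling standard perturbation and Rokhlin-tower estimates.
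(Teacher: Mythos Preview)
Your plan diverges from the paper's argument, and the divergence is exactly where the gap lies. The paper does \emph{not} obtain Theorem~\ref{big} by iterating Theorem~\ref{prebig} as a black box along $q_k\downarrow 1$ and splicing the resulting perturbations on blocks. Instead it reruns the \emph{construction} from the proof of Theorem~\ref{prebig} a single time, but with the parameters chosen in terms of $\phi$: after reducing to $\phi(x)\le (\log x)^2$, it sets $g(u)=\log\phi^{-1}(u^4)$, $M(u)=\lfloor 2^{g(u)}\rfloor$, $R(u)=u^{1/2}2^{-g(u)}$, and builds one perturbation $\Delta$. The $\infty$-sweeping out in $L^1$ is then immediate from Lemma~\ref{badseq} with $\Phi(x)=|x|$; no splicing of witnesses is needed. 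For $L\phi(L)$-goodness the paper estimates, for every $1<p\le 2$, the $L^p$ norm of the square function of the ``added'' averages and shows it is bounded by a constant times
\[
\Big(\sum_{u\ge 1}\frac{u}{2^{g(u)(p-1)}}\Big)^{1/p}\ \le\ \phi\big(e^{1/(p-1)}\big)+A,
\]
and only then invokes the Yano-type Theorem~\ref{Yanos} to pass from this family of $L^p$ bounds to an $L^1$ bound against $\int |f|\,\phi(|f|)$.

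The reason your route does not close is the hypothesis of Yano's theorem: you need a \emph{single} sublinear operator $T$ with $\|Tf\|_p\le \phi(e^{1/(p-1)})\|f\|_p$ for all $p$ near $1$. Using Theorem~\ref{prebig} as a black box at level $q_k$ gives only the qualitative statement ``the level-$k$ perturbation is $L^{q_k}$-good''; it provides no control on the maximal constant, let alone the specific growth rate $\phi(e^{1/(p-1)})$ required by Theorem~\ref{Yanos}. That rate comes in the paper precisely from the $\phi$-dependent choice of $M(u)$ and $R(u)$, which your block-interleaving scheme never makes. Moreover, the perturbations produced by Theorem~\ref{prebig} are global modifications of $S$, not localized to intervals, so ``installing the level-$k(j)$ modification on block $I_j$'' is not well defined without reopening the construction---at which point you are back to what the paper actually does. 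In short: the missing idea is that the parameters of the perturbation must be tailored to $\phi$ from the start so that the $L^p$ operator norms blow up exactly like $\phi(e^{1/(p-1)})$; Theorem~\ref{prebig} cannot be quoted off the shelf to achieve this.
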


\begin{lem}\label{notasgood}
 Suppose $\psi$ and $\phi$ are each strictly increasing, unbounded real-valued functions with $\psi(x) = \phi(x) = 1$ for $x \leq 1$,  and $\phi(x) << \log^j(x)$ for all $j > 0$.  If there exists a $k$ so that $\frac{[\phi(u)]^\frac{k}{k+1}}{\psi(u)} \rightarrow \infty$ then there is a sequence that is universally $L \phi(L)$-good but universally $\infty$-sweeping out for $L \psi(L)$
\end{lem}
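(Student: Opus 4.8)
The plan is to deduce Lemma~\ref{notasgood} from Theorem~\ref{big} by exhibiting a single Orlicz space $L\phi_0(L)$, with $\phi \ll \phi_0$ in the appropriate sense, such that a perturbation produced by Theorem~\ref{big} is simultaneously $L\phi(L)$-good and $\infty$-sweeping out in $L\psi(L)$. The two hypotheses on $\phi$ and $\psi$ are exactly what one needs for this wedging argument: since $\phi(x) \ll \log^j(x)$ for every $j$, the space $L\phi(L)$ contains every space $L\log^j(L)$, hence in particular it is much larger than a space of the form $L\log^k(L)$ but (being strictly contained in $L^1$) still admits the conclusion of Theorem~\ref{big}; and the condition $[\phi(u)]^{k/(k+1)}/\psi(u) \to \infty$ is the quantitative statement that $L\psi(L)$ sits strictly \emph{below} the intermediate space $L\phi^{k/(k+1)}(L)$, so that a function witnessing $\infty$-sweeping out there can be transferred.

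The key steps, in order, are as follows. First I would fix a universally $L^1$-good zero-density sequence $S$ — for instance any of the classical examples cited in the introduction, or simply invoke the existence result of \cite{Bellow1984} — so that Theorem~\ref{big} applies. Second, I would set $\phi_0 := \phi^{k/(k+1)}$ (after the routine adjustments to make $\phi_0$ strictly increasing, unbounded, and equal to $1$ on $(-\infty,1]$, which is harmless); note $\phi_0$ is still dominated by every $\log^j$, so $L\phi_0(L)$ is a genuine Orlicz space properly containing every $L\log^j(L)$ and properly contained in $L^1$. Third, apply Theorem~\ref{big} with this $\phi_0$ to obtain a perturbation $\Delta$ of $S$ that is universally $L\phi_0(L)$-good and universally $\infty$-sweeping out in $L^1$. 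Fourth, I would check the two halves of the conclusion: for goodness, observe that $\phi \ll \log^j$ for all $j$ forces $L\phi(L) \supseteq L\log^j(L)$, and one must verify $L\phi(L) \subseteq L\phi_0(L)$ — this follows because $\phi_0 = \phi^{k/(k+1)} \le \phi$ for large arguments, so the Orlicz norm inequality gives the inclusion, whence $\Delta$ is $L\phi(L)$-good. For the sweeping-out half, I would argue that $\infty$-sweeping out in $L^1$ combined with the growth gap $[\phi(u)]^{k/(k+1)}/\psi(u) = \phi_0(u)/\psi(u) \to \infty$ — i.e. $L\psi(L) \subsetneq L\phi_0(L)$ — lets one replace the $L^1$ witness by a witness lying in $L\psi(L)$; concretely, one truncates/rescales the unbounded witness so that it lands in $L\psi(L)$ while still having unbounded ergodic maximal function, using that $\psi$ grows slower than $\phi_0$.

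The main obstacle I anticipate is precisely this last transference: Theorem~\ref{big} as stated only promises $\infty$-sweeping out in $L^1$, which is a strictly weaker statement than $\infty$-sweeping out in the smaller space $L\psi(L)$, so one cannot simply quote it. The resolution will have to come from the internal construction behind Theorem~\ref{big} — the perturbation is built from blocks on which one arranges large contributions to the maximal function from functions of controlled Orlicz size — so the honest argument either (i) re-examines that construction to see that the sweeping-out witnesses can be taken in $L\phi_0(L)$ whenever the perturbation was built to be $L\phi_0(L)$-good (a self-improvement: the witness sits just outside the space the sequence was tuned for), and then uses $L\psi(L)\subsetneq L\phi_0(L)$ to place the witness in $L\psi(L)$; or (ii) invokes a general principle that a sequence which is $L\phi_0(L)$-good and $\infty$-sweeping out in $L^1$ is automatically $\infty$-sweeping out in every Orlicz space strictly larger than $L\phi_0(L)$. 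I would pursue route (i), since it keeps the exponent bookkeeping ($k/(k+1)$, coming from the $k$-fold structure of the blocks) transparent and explains why the specific power $k/(k+1)$ appears in the hypothesis. The remaining verifications — the Orlicz inclusions $L\phi(L)\subseteq L\phi_0(L)$ and $L\psi(L)\subsetneq L\phi_0(L)$, and that perturbation is an equivalence relation preserved under these manipulations — are routine and I would dispatch them by citing \cite{RaoRen} for the Orlicz-space comparisons.
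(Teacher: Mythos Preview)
Your proposal contains a genuine gap, stemming partly from a reversed Orlicz inclusion and partly from treating Theorem~\ref{big} as a black box when its sweeping-out output is too weak for what you need.

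First, the inclusion: from $\phi_0(u)/\psi(u)\to\infty$ one obtains $L\phi_0(L)\subsetneq L\psi(L)$, not $L\psi(L)\subsetneq L\phi_0(L)$ as you write. (Larger Young function means smaller Orlicz space.) With the corrected chain
\[
L\phi(L)\subseteq L\phi_0(L)\subsetneq L\psi(L)\subsetneq L^1,
\]
the ``goodness'' half of your argument survives: $L\phi_0(L)$-good implies $L\phi(L)$-good. But the sweeping-out half collapses. Theorem~\ref{big} delivers only an $L^1$ witness, and ``$\infty$-sweeping out in $L^1$'' is \emph{weaker} than ``$\infty$-sweeping out in $L\psi(L)$''; you need a witness in the smaller space. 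Concretely, the witness $F$ built in the proof of Theorem~\ref{big} (applied to $\phi_0$) takes the value $2^{g(u)}$ on an arithmetic progression of density $1/M(u)\approx 2^{-g(u)}$, so $D(|F|\psi(|F|))\approx \psi(2^{g(u)})\to\infty$: this $F$ is \emph{not} in $L\psi(L)$. Your suggested truncation/rescaling has no mechanism to keep the maximal function unbounded while forcing $F$ into $L\psi(L)$, and your route~(ii) is simply false as a general principle.

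The paper does not use Theorem~\ref{big} as a black box. It reopens the construction and modifies the block parameters to involve $\psi$ explicitly: with $g(u)=\log\phi^{-1}(u^{k+1})$ one takes
\[
M(u)=\bigl\lfloor 2^{g(u)}\psi\bigl(2^{g(u)}\bigr)\bigr\rfloor,\qquad
R(u)=\frac{\bigl(u^{k}/\psi(2^{g(u)})\bigr)^{1/2}}{2^{g(u)}}.
\]
The extra factor $\psi(2^{g(u)})$ in $M(u)$ is exactly what makes the witness $F$ satisfy $D(|F|\psi(|F|))\le 1$, so that Lemma~\ref{badseq} applies in $L\psi(L)$. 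The hypothesis $[\phi(u)]^{k/(k+1)}/\psi(u)\to\infty$ then enters not as an abstract Orlicz comparison but as the precise growth condition guaranteeing that the maximal averages, which are of size $\bigl(u^{k}/\psi(2^{g(u)})\bigr)^{1/2}$, tend to infinity. The $L\phi(L)$-goodness is established by redoing the extrapolation estimate with these new parameters, using $\phi(x)\ll\log^j(x)$ to control $\sum_u u^{k}/2^{g(u)(p-1)}$ by $\phi(e^{1/(p-1)})$. In short, the exponent $k/(k+1)$ does not arise from wedging in an intermediate space $L\phi_0(L)$; it comes from balancing $M(u)R(u)^p$ in the good estimate against $R(u)\cdot 2^{g(u)}$ in the bad estimate after the $\psi$-correction, and your outline does not reach this computation.
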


Our strategy in Theorem \ref{prebig} will be to adapt the approach of \cite{Wierdl1998Perturb} to the Orlicz space setting.  Using a similar adaptation and a slight generalization of Yano's extrapolation theorem from \cite{Yano1951}, we prove Theorem \ref{big}.  We will then show how these methods can lead us to Lemma \ref{notasgood}.

\section{Yano's Extrapolation Theorem}\label{YanoSec}
Theorem \ref{Yanos} is a slight generalization of Shigeki Yano's extrapolation result from \cite{Yano1951}, and the proof follows Yano's original proof closely.  The main idea of the proof is first to take advantage of the sublinearity of both the operator and the norm to disassemble the function, apply the assumed inequality, and finally to reassemble the function from the pieces.  

\begin{thm}\label{Yanos}
Suppose $T$ is a positive sublinear operator taking measureable functions to measureable functions and bounded on $L^\infty$,  $X$ is a probability space with measure $m$, and $\phi: \mathbb{R^+} \rightarrow \mathbb{R}$ is a nondecreasing function so that
\begin{enumerate}
 \item[(a)] $\phi(x)= 1$ if $x \leq 1$, and
 \item[(b)] $\phi(x) \leq c\sqrt{x}$ for some constant $c$ and all $x>1$.
\end{enumerate}

Let $\varepsilon > 0$.  If, for a measureable function $f: X \rightarrow \mathbb{R}$, we have
 \begin{equation} \label{assum}
  \left\|  Tf \right\|_{L^p} \leq  \phi \left( e^\frac{1}{p-1} \right)  \left\| f \right\|_{L^p} 
 \end{equation}
for all $p$, $1 < p \leq 1 + \varepsilon$, then there is a constant $A_{\phi}$ so that
\begin{equation}
  \left\|  Tf \right\|_{L^1} \leq A_{\phi} + 4e^3 \int_X |f| \, \phi ( |f| ) \, dm.
 \end{equation}

\end{thm}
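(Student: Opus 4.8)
The plan is to follow Yano's classical argument, which proceeds by splitting $f$ dyadically according to the size of $|f|$, estimating each layer in a well-chosen $L^p$ with $p$ close to $1$, and summing. First I would reduce to $f \geq 0$ by positivity and sublinearity of $T$, and normalize. Write $X_0 = \{|f| \leq 1\}$ and, for $k \geq 1$, $X_k = \{2^{k-1} < |f| \leq 2^k\}$; set $f_k = f \cdot \mathbf 1_{X_k}$ (using $\chi$ or $\mathbf 1$ notation as the paper permits), so $f = \sum_{k \geq 0} f_k$ pointwise and, by sublinearity, $Tf \leq \sum_{k\geq 0} Tf_k$ pointwise, hence $\|Tf\|_{L^1} \leq \sum_{k \geq 0}\|Tf_k\|_{L^1}$. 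The term $k=0$ is controlled by the $L^\infty$-bound on $T$ together with $m(X)=1$, contributing a constant absorbed into $A_\phi$.

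For $k \geq 1$ I would apply the hypothesis \eqref{assum} with an exponent $p_k > 1$ chosen so that $e^{1/(p_k-1)} \approx 2^k$, i.e. $p_k = 1 + 1/(k\log 2)$; one must check $p_k \leq 1+\varepsilon$ for $k$ large, handling the finitely many small $k$ by the $L^\infty$ bound as before. Then, using $\|Tf_k\|_{L^1} \leq \|Tf_k\|_{L^{p_k}}$ (since $m(X)=1$) and \eqref{assum},
\[
 \|Tf_k\|_{L^1} \leq \phi\!\left(e^{1/(p_k-1)}\right)\|f_k\|_{L^{p_k}} \leq \phi(2^k)\,\big(\, 2^{k p_k}\, m(X_k)\,\big)^{1/p_k} \leq \phi(2^k)\, 2^k\, m(X_k)^{1/p_k}.
\]
Now $m(X_k)^{1/p_k} = m(X_k)\cdot m(X_k)^{1/p_k - 1}$, and $1/p_k - 1 = -(p_k-1)/p_k$, so $m(X_k)^{1/p_k-1} = m(X_k)^{-(p_k-1)/p_k} \leq m(X_k)^{-(p_k-1)}$. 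Since $p_k - 1 = 1/(k\log 2)$, we get $m(X_k)^{-(p_k-1)} = \exp\!\big(\frac{-\log m(X_k)}{k\log 2}\big)$. This is the one place needing care: if $m(X_k)$ is extremely small this factor could a priori be large, but here it is raised to a power $1/k$, so $\exp(\frac{-\log m(X_k)}{k \log 2})$ is bounded by a constant times $(\text{something summable})$ after one uses that $\sum_k m(X_k) \le 1$ forces $-\log m(X_k)$ to grow at most like $k\log 2 \cdot (\text{slowly})$ on the relevant range — concretely one splits the sum over $k$ according to whether $m(X_k) \geq 2^{-k}$ or not, the first case giving $m(X_k)^{1/p_k-1}\le 2$ directly and the second being controlled because $\phi(2^k)2^k m(X_k)^{1/p_k}$ is then dominated by $\phi(2^k) 2^k \cdot 2^{-k/p_k}\cdot m(X_k)^{1}\le \cdots$, wait — more cleanly, one keeps $m(X_k)^{1/p_k}\le m(X_k)^{1/2}$ is false; instead one uses the bound $\phi(x)\le c\sqrt x$ to write $\phi(2^k)\le c\,2^{k/2}$ and combines with $2^k m(X_k)^{1/p_k}$.

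So the decisive step is to bound $\sum_{k\geq 1}\phi(2^k)\,2^k\,m(X_k)^{1/p_k}$ by a constant plus $4e^3\int |f|\phi(|f|)\,dm$. On $X_k$ one has $|f|\phi(|f|) \geq 2^{k-1}\phi(2^{k-1})$, so $\int_X |f|\phi(|f|)\,dm \geq \sum_{k\geq1} 2^{k-1}\phi(2^{k-1})\, m(X_k)$, and it remains to show termwise (up to the global constant and the factor $4e^3$) that $\phi(2^k)2^k m(X_k)^{1/p_k} \lesssim 2^{k-1}\phi(2^{k-1})m(X_k)$, i.e. that $m(X_k)^{1/p_k - 1} \lesssim 1$ after absorbing the bounded ratio $\phi(2^k)/\phi(2^{k-1})$ (which the growth condition $\phi(x)\le c\sqrt x$ plus monotonicity keeps under control, since $\phi(2^k)\le c 2^{k/2}$ while $\phi(2^{k-1})\ge 1$, so the excess $\phi(2^k)/\phi(2^{k-1}) \le c 2^{k/2}$ must itself be beaten by the gain from $m(X_k)^{1/p_k-1}$ against a subcollection of indices). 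The clean way, and the way I expect the author's proof to go, is: for indices with $m(X_k)\geq e^{-k}$ we get $m(X_k)^{1/p_k-1}=m(X_k)^{-1/(k\log 2 + 1)}\le e^{k/(k\log 2+1)}\le e^{1/\log 2} < e^3$, giving the constant $4e^3$ after accounting for the $\sqrt{\phantom{x}}$-type loss and the $2^{k-1}$ versus $2^k$; for indices with $m(X_k)< e^{-k}$, the measure is so small that $\phi(2^k)2^k m(X_k)^{1/p_k}$ is summable on its own against a geometric-type series and contributes only to $A_\phi$. Assembling the two ranges and the $k=0$ term yields the stated inequality $\|Tf\|_{L^1}\le A_\phi + 4e^3\int_X |f|\phi(|f|)\,dm$. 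The main obstacle is purely bookkeeping: getting the explicit constant $4e^3$ to come out exactly, which forces the particular choice $p_k = 1 + 1/(k\log 2)$ and a careful split of the index set rather than a crude single estimate.
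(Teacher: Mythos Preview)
Your plan is Yano's argument and is the same route the paper takes: decompose into level sets, apply \eqref{assum} at a well-chosen exponent for each level, and split the resulting sum according to whether the level set has large or small measure. The paper works in base $e$ with $E_n=\{e^n\le g^+<e^{n+1}\}$ for $g^+=f^+/2+1$ and takes $p_n=1+1/n$; the shift by $1$ disposes of the low part cleanly, and the base-$e$ choice is exactly what makes the constant $4e^3$ fall out without extra bookkeeping.

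Two points in your execution would actually fail as written. First, the threshold $m(X_k)<e^{-k}$ is too weak for the small-measure sum: with $\phi(2^k)\le c\,2^{k/2}$ and $1/p_k=k\log 2/(k\log 2+1)\to 1$, the generic term obeys
\[
\phi(2^k)\,2^k\,m(X_k)^{1/p_k}\;\le\;c\,2^{3k/2}\,e^{-k\cdot k\log 2/(k\log 2+1)}\;\sim\;c\,e^{(\tfrac{3}{2}\log 2-1)k},
\]
and $\tfrac{3}{2}\log 2>1$, so this diverges. The paper's threshold is $m(E_n)<e^{-2(n+1)}$, which gives $(m(E_n))^{n/(n+1)}<e^{-2n}$ and hence a summable bound $ce\cdot e^{-n/2}$. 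Second, your concern about $\phi(2^k)/\phi(2^{k-1})$ is not resolved by hypotheses (a)--(b): nothing forbids $\phi$ from jumping by a factor comparable to $2^{k/2}$ between consecutive dyadic points, so the comparison of $\phi(2^k)2^k m(X_k)$ with $2^{k-1}\phi(2^{k-1})m(X_k)$ genuinely breaks. The paper avoids this by aligning the two occurrences of $\phi$: it applies \eqref{assum} to the characteristic function $\chi_{E_n}$ with $p_n=1+1/n$, so the upper bound carries $\phi(e^n)$, while on $E_n$ one has $g^+\ge e^n$ and hence $g^+\phi(g^+)\ge e^n\phi(e^n)$, so $\phi$ appears at the \emph{same} point $e^n$ on both sides and no ratio is needed. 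In your base-$2$ setup the fix is simply to choose $p_k$ so that $e^{1/(p_k-1)}$ equals the \emph{lower} edge of $X_k$, or equivalently to set $X_k=\{2^k\le |f|<2^{k+1}\}$.
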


\begin{proof}

 Let $f: [0,1] \rightarrow \mathbb{R}$ be a measureable function. Then $f= f^+ - f^-$, where $f^+$ and $f^-$ are the positive and negative parts of $f$.

Define $g^+= \frac{f^+}{2} + 1$ and $g^-= \frac{f^-}{2} + 1$, and let $E_n = \{x : e^{n} \leq g^+ < e^{(n+1)}\}$.

Then $\sum_{n=0}^\infty e^{n} \chi_{E_n} \leq g^+ \leq \sum_{n=0}^\infty e^{n+1} \chi_{E_n}$.

By sublinearity and positivity of $T$ we have 
\begin{equation*}
 |Tg^+| \leq \sum_{n=0}^\infty e^{(n+1)} |T \chi_{E_n}|.
\end{equation*}
Integrating and then applying H\"{o}lder's Inequality, we have 
\begin{align*}
 \int_X |Tg^+| \, d\mu  &\leq \sum_{n=0}^\infty e^{(n+1)} \int_X |T \chi_{E_n}|\, dm \\
			&\leq \sum_{n=0}^\infty e^{(n+1)} \left\| T \chi_{E_n} \right\|_{L^{p_n}} \cdot m(X). 
\end{align*}

Let $p_n = 1 + \frac{1}{n}$. Then, applying the assumption (\ref{assum}), 
\begin{equation}
  \int_X |Tg^+| \, dm \leq  \sum_{n=0}^\infty e^{(n+1)} \phi \left(e^n\right) \left(m\left( E_n \right)\right)^\frac{n}{n+1}. \label{EnSum}
\end{equation} 

If $m\left( E_n \right) < e^{-2(n+1)}$ then the sum in (\ref{EnSum}) converges to a constant $a_{\phi}$.  

If $m\left( E_n \right) \geq e^{-2(n+1)}$, we have 
\begin{align*}
\sum_{m\left( E_n \right) \geq e^{-2(n+1)}} e^{(n+1)} &\phi \left(e^n\right) \left(m\left( E_n \right)\right)^\frac{n}{n+1} \\
&= \sum_{m\left( E_n \right) \geq e^{-2(n+1)}} e^{(n+1)} \phi \left(e^n\right) m\left( E_n \right)  \left(m\left( E_n \right)\right)^\frac{-1}{n+1} \\
& \leq \sum_{m\left( E_n \right) \geq e^{-2(n+1)}} e^{(n+1)} \phi \left(e^n\right) m\left( E_n \right)  \left( e^{-2(n+1)} \right)^\frac{-1}{n+1} \\
& = e^3 \sum_{ m\left( E_n \right) \geq e^{-2(n+1)}} e^{(n+1)} \phi \left(e^n\right) m\left( E_n \right) \\
&\leq e^3 \int_X g^+ \, \phi \left(g^+ \right) \, dm.
\end{align*}

Hence, 
\begin{align*}
  \int_X |Tg^+| \, dm &\leq a_{\phi} + e^3  \int_X g^+ \, \phi \left(g^+\right)  \, dm.
\end{align*}

Similarly, $\int_X |Tg^-| \, dm \leq  {a'}_{\phi} + e^3  \int_X g^-  \, \phi \left(g^-\right)  \, dm.$
\newline

By the sublinearity of T, we have
\begin{align*}
\int_X \left|T\left(\frac{f}{2}\right) \right| \, dm &\leq \int_X |Tg^+| \, dm + \int_X |Tg^-| \, dm \\
&\leq  a_{\phi} + e^3  \int_X \left(g^+ \right)  \phi \left(g^+\right)  \, dm + {a'}_{\phi} + e^3  \int_X \left(g^- \right)  \phi \left(g^-\right)  \, dm
\\
&\leq  (a_{\phi}+ {a'}_{\phi}) + 2e^3 \int_X \left( \frac{|f|}{2} + 1\right) \, \phi \left( \frac{|f|}{2} + 1\right)  \, dm \\ 
\end{align*}

But 
\begin{align*}
 &2e^3 \int_X \left( \frac{|f|}{2} + 1\right) \, \phi \left( \frac{|f|}{2} + 1\right)  \, dm \\
&=  2e^3 \left\{ \int_{\{\frac{|f|}{2} > 1\}} \left( \frac{|f|}{2} + 1\right) \, \phi \left( \frac{|f|}{2} + 1\right)  \, dm + \int_{\{\frac{|f|}{2} \leq 1\}} \left( \frac{|f|}{2} + 1\right) \, \phi \left( \frac{|f|}{2} + 1\right) \, dm \right\}\\
&\leq 2e^3  \left\{ \int_{\{\frac{|f|}{2} > 1\}}  |f|  \, \phi \left( |f|  \right)  \, dm + \int_{\{\frac{|f|}{2} \leq 1\}} 2   \, \phi( 2 )  \, dm \right\}\\
&< 2e^3 \int_X  |f|  \, \phi\left( |f| \right)  \, dm + 4e^3 \phi(2).
\end{align*}

Applying the sublinearity of $T$ once more, 
\begin{equation*}
  \frac{1}{2} \int_X \left|T\left( f \right) \right| \, dm \leq a_{\phi}+ {a'}_{\phi} + 4e^3 \phi(2) + 2e^3 \int_X  |f|  \, \phi\left( |f| \right)  \, dm.
\end{equation*}

Letting $A_\phi = 2a_{\phi}+ 2{a'}_{\phi} + 8e^3 \phi(2)$ we are done.
\end{proof}

\section{Proof of Theorem \ref{prebig}}
To construct our sequence, we will begin with any zero-density, universally $\frac{L^q}{\phi \left( L \right)}$-good sequence.  The zero-density property gives us large gaps in the sequence, into which we will insert sets of ``badly behaved'' elements.  The number of these added elements will be very small relative to the number of elements of our original sequence up to the point of their inclusion, thereby guaranteeing that our new sequence will be a perturbation of the original.  They will be of sufficient number, however, to insure the failure of the relevant maximal inequality.  We will then seek a bound on the $L^q$ norm of the maximal operator.

In order to show that our constructed sequence is universally $\infty$-sweeping out for $\frac{L^q}{\phi \left( L \right)}$, we will make use of the following lemma (and associated definition) adapted from \cite{Wierdl1998Perturb}.

\begin{defn}
 Suppose $f: \mathbb{Z} \rightarrow \mathbb{R}$. Define 
 \begin{equation*}
  D(f) = \limsup_{N \rightarrow \infty} \frac{1}{2N+1} \sum_{n = -N}^N |f(n)|.
 \end{equation*}
Further, if $A$ is a measureable set, we define $D(A) = D(\chi_A)$.
\end{defn}

\begin{lem}\label{badseq}
 Let $\Phi(x) : \mathbb{R} \rightarrow \mathbb{R}$ be a non-decreasing function with $ 1 << \Phi(x) << x^r$ for some real constant $r>0$, and denote by $\Phi(L)$ the set of real-valued functions $\left\{f : \int_X \Phi \left(|f|\right) \, dm < \infty \right\}$. Let $S$ be a strictly increasing sequence of positive integers.  If for every positive $K$ and $\varepsilon$ there is an $f: \mathbb{Z} \rightarrow \mathbb{R}$, with $D\left( \Phi\left( f \right) \right) \leq 1$, and a finite set of integers $\Lambda$ so that
 \begin{equation*} 
  D \left\{ n: \max_{N \in \Lambda} \frac{1}{|S(N)|} \sum_{m \in S(N)} f(n+m) \geq K \right\}  \geq 1 - \varepsilon
 \end{equation*}
then S is a universally $\infty$-sweeping out sequence for $\Phi(L)$.
\end{lem}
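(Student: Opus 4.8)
\textbf{The plan} is a transference argument in the spirit of the Calder\'{o}n--Conze principle: convert the combinatorial hypothesis, through Rokhlin columns, into a lower bound for a maximal ergodic average on one fixed system, and then amplify to produce a single bad function. Fix an aperiodic probability measure-preserving $(X,\Sigma,m,T)$. At one scale: given $K>0$ and $\varepsilon>0$, take the witness $f$ and the finite set $\Lambda$ from the hypothesis; replacing $f$ by $f^{+}$ (which only enlarges the relevant averages and changes $D(\Phi(f))$ by a finite additive constant) we may assume $f\ge 0$. Let $R=\max\{a_{m}:m\in S(N),\ N\in\Lambda\}$ be the reach of these averages. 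Since $D$ is a $\limsup$ over symmetric windows, I can choose $L\gg R$ for which, on $[-L,L]$, both the window-average of $\Phi(f)$ is $\le 1+\delta$ and the density of the good indices (those $n$ with $\max_{N\in\Lambda}\tfrac1{|S(N)|}\sum_{m\in S(N)}f(n+m)\ge K$) is $\ge 1-\varepsilon-\delta$. By Rokhlin's lemma build a column $E,TE,\dots,T^{2L}E$ of measure $\ge 1-\eta$ and put $g=f(i-L)$ on $T^{i}E$ and $g=0$ off the column. For $x\in T^{i}E$ with $i+R\le 2L$ the orbit segment used by the averages over $\Lambda$ stays in the column, so those ergodic averages of $g$ at $x$ equal the corresponding combinatorial averages of $f$ (after the harmless reindexing $N\mapsto|S(N)|$ of the number of terms). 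Hence
\[
m\{\,x:\sup_{N}A_{N}g(x)\ge K\,\}\ \ge\ 1-\varepsilon-\delta-\eta-\tfrac{R}{2L+1},
\]
while $\int_{X}\Phi(g)\,dm\le 1+\delta+\Phi(0)=:C$ (since $\sum_{k=-L}^{L}\Phi(f(k))\le(2L+1)(1+\delta)$ and $m(E)\le(2L+1)^{-1}$), a bound independent of $K$ and $\varepsilon$.

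\textbf{Amplification.} Run the above with $K=K_{j}$ to be chosen, $\varepsilon=2^{-j}$, and $\delta,\eta,R/(2L+1)$ each $\le 2^{-j}$; this yields $g_{j}\ge 0$ with $\int_{X}\Phi(g_{j})\,dm\le C$ and $m(G_{j})\ge 1-4\cdot 2^{-j}$, where $G_{j}=\{\,x:\sup_{N}A_{N}g_{j}(x)\ge K_{j}\,\}$. Put $f=\sum_{j\ge 1}2^{-j}g_{j}$. Since the weights sum to $1$ and $\Phi$ is convex, Jensen's inequality gives $\int_{X}\Phi(f)\,dm\le\sum_{j}2^{-j}\int_{X}\Phi(g_{j})\,dm\le C$, so $f\in\Phi(L)$. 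As every $g_{j}\ge 0$ we have $\sup_{N}A_{N}f(x)\ge 2^{-j}\sup_{N}A_{N}g_{j}(x)\ge 2^{-j}K_{j}$ for $x\in G_{j}$; taking e.g.\ $K_{j}=j\,4^{j}$ makes $2^{-j}K_{j}=j\,2^{j}\to\infty$, and since $\sum_{j}m(G_{j}^{c})<\infty$, Borel--Cantelli places a.e.\ $x$ in $G_{j}$ for all large $j$. Therefore $\sup_{N}A_{N}f(x)=\infty$ for a.e.\ $x$, which is precisely the assertion that $S$ is universally $\infty$-sweeping out for $\Phi(L)$.

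\textbf{Expected obstacle.} Most of this is bookkeeping: the error sets absorbed in the first step --- points whose orbit escapes the top of the column, the column's complement, and the gap between the $A_{N}$ and $|S(N)|$ normalizations --- must be kept summable in $j$; this is routine, and it does no harm that the hypothesis forces the reach $R_{j}$, hence the column height, to grow with $K_{j}$, since only $\sup_{N}A_{N}$ is ever inspected. The genuinely delicate point is the reassembly: $\int\Phi(g_{j})$ stays of size $\approx 1$, so the damping factor $2^{-j}$ is unavoidable and must be overcome by sending $K_{j}\to\infty$ (which the hypothesis permits), and moving the series into $\Phi(L)$ uses convexity of $\Phi$ --- legitimate in every application of the lemma, where the Orlicz space is given by a Young function. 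For a merely non-decreasing $\Phi$ one cannot invoke Jensen and must instead work directly in the complete metric space $\Phi(L)$, using the $\Delta_{2}$-type regularity that $\Phi\ll x^{r}$ provides to see that $\sum_{j}2^{-j}g_{j}$ still converges there; carrying that case out carefully is where I expect the real effort to lie.
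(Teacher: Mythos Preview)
The paper does not give its own proof of this lemma; it simply writes ``The proof does not significantly differ from that presented in [Wierdl1998Perturb].''  Your Rokhlin--tower transference followed by a Borel--Cantelli amplification is exactly the standard argument that lies behind that reference, so your approach coincides with the paper's (implicit) one.

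One correction to your closing remarks: the growth bound $\Phi(x)\ll x^{r}$ does \emph{not} yield a $\Delta_{2}$ condition, so the fallback you sketch for merely non-decreasing $\Phi$ would not go through as written.  This is harmless for the paper, since in every use of the lemma $\Phi$ is (equivalent to) a Young function and your Jensen step is legitimate; but if you wanted the lemma in the generality stated, the clean fix is to avoid superposition altogether---build the $g_{j}$ on Rokhlin towers placed in pairwise disjoint pieces of $X$ (of measure $\sim 2^{-j}$), so that $\Phi\bigl(\sum g_{j}\bigr)=\sum\Phi(g_{j})$ pointwise, and then arrange the reaches so that the maximal averages of each $g_{j}$ are large on almost all of $X$, not just on its own tower.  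That route uses only monotonicity of $\Phi$.
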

The proof does not significantly differ from that presented in \cite{Wierdl1998Perturb}.

\begin{proof}[Proof of Theorem \ref{prebig}]
 Suppose $M(u): \mathbb{N} \rightarrow \mathbb{N} \cup \{ 0 \} $ is a nondecreasing function. For $u= 1, 2, 3, ...$ define a sequence of sets $\left\{ A_u \right\}$ as follows.
\begin{align*}
 A_1 &= \left\{ 0, 1, 2, ..., M(1)-1 \right\}\\
 A_2 &= \left\{ M(1), M(1)+1, ..., M(1)+ M(2) - 1\right\}\\
 \vdots\\
 A_u &= \left\{ \sum_{j=1}^{u-1} M(j), \sum_{j=1}^{u-1} M(j) + 1, ..., \sum_{j=1}^{u} M(j) -1 \right\}\\
 \vdots
\end{align*}
So every positive integer is contained in some $A_u$, $A_i$ and $A_j$ are disjoint for all $i \neq j$, and $|A_u| = M(u)$ for all $u$. \\

Let $\{n_k\}$ be a sequence with properties to be discussed below.  To create our perturbation, $\Delta$, we will add a certain number of elements to $S$ from each interval $[n_k, 2n_k)$. Let $R(x): \mathbb{R} \rightarrow \mathbb{R}^+$ be a decreasing function with $\lim_{x \rightarrow \infty} R(x)= 0$. From each interval $[n_k, 2n_k)$, we will add $R(u) |S(n_k)|$ elements congruent to $k \mod M(u)$ if $k \in A_u$.\\

We will need each $[n_k, 2n_k)$ to be disjoint from the next.  Hence we require that $n_k > 2n_{k-1}$.  We also will need each interval to be large enough to contain our added elements.  Since there is an integer congruent to $k \mod M(u)$ in every $M(u)$ consecutive integers, we require the length of each interval, $n_k$, to be greater than
\begin{equation*}
 R(u)|S(n_k)| \cdot M(u), \, k \in A_u.
\end{equation*}

We want the number of elements of $S$ in the interval $[n_k, 2n_k)$ to be small relative to the length of the interval.  This will help us insure that the added elements upset the relevant maximal inequality.  We require $|S(2n_k)| \leq 3 |S(n_k)|$.  Finally, choosing our intervals so that 
\begin{equation*}
 R(u)|S(n_k)| > \sum_{j=1}^{k-1} |S(n_j)|
\end{equation*}
will help insure that $\Delta$ is, in fact, a perturbation.\\

We can satisfy both our disjointness and perturbation requirements by choosing $n_k$ large enough. To see that we may likewise choose $n_k$ in such a way as to satisfy the other two, consider that since our original sequence has density zero, there must be a sequence of positive integers $\{ m_j \}$ such that 
\begin{equation*}
 \frac{|S(m_j)|}{m_j} \leq \frac{|S(m)|}{m} \mbox{, for } m \leq m_j.
\end{equation*}
Let $n_k = [m_j/2]$.  We then have 
\begin{equation*}
 \frac{|S(n_k)|}{n_k} \leq \frac{|S([m_j/2])|}{[m_j/2]} \leq 3 \frac{|S(m_j)|}{m_j}.
\end{equation*}
Since $\frac{|S(m_j)|}{m_j} \rightarrow 0$ monotonically as $j \rightarrow \infty$, we can choose $j$ large enough so that 
\begin{equation*}
 \frac{|S(n_k)|}{n_k} \leq \frac{1}{R(u)M(u)}.
\end{equation*}
Further, we have
\begin{align*}
 |S(2n_k)| \leq |S(m_j)| &= \frac{|S(m_j)|}{m_j} \cdot m_j \leq \frac{|S([m_j/2])|}{[m_j/2]} \cdot m_j \\
& \leq 3 |S([m_j/2])| = 3|S(n_k)|,
\end{align*}
our third requirement.\\

Having constructed $\Delta$, we will now show that it is a perturbation.
Since $\Delta$, is formed by adding new terms to $S$, we need only show that
\begin{equation*}
 \lim_{n \rightarrow \infty} \frac{|\Delta (n)  \setminus S|}{|S(n)|} = 0.
\end{equation*}

For any $n$ sufficiently large, there is a $k$ and $u$, with $k \in A_u$ so that $n_k \leq n < n_{k+1}$.
Then, using that $R(u)|S(n_k)| > \sum_{j=1}^{k-1} |S(n_j)|$ and $R(u)^2 < R(u)$, we have
\begin{equation} \label{allless}
 |\Delta (n) \setminus S| \leq R(u) \left( |S(n_k)| + \sum_{j=1}^{k-1} |S(n_j)| \right) < 2 R(u) |S(n_k)|.
\end{equation}

Since $|S(n)| \geq |S(n_k)|$, 
\begin{equation*}
 \frac{|\Delta (n) \setminus S(n)|}{|S(n)|} \leq  2 R(u) |S(n)| \cdot \frac{1}{|S(n)|} = 2 R(u).
\end{equation*}
This goes to 0 as $n \rightarrow \infty$ since as $n$ goes to infinity so do $k$ and $u$.\\

To complete the construction of our perturbation, we will now let 
\begin{align*}
 M(u)&= \left\lfloor \frac{\left\{ \phi^{-1}(u^3) \right\}^q}{u^3} \right\rfloor  \mbox{ and}\\ 
 R(u)&= \frac{u^\frac{1}{q}}{\phi^{-1} (u^3)}
\end{align*}

If $n$ is an integer then for every $u$ there must be some $k \in A_u$ so that $n \equiv -k \mod M(u)$.  By our construction of $\Delta$, there are at least $R(u)|S(n_k)|$ integers congruent to $k \mod M(u)$ in $\Delta \cap [n_k, 2n_k)$.  Let 
\begin{equation*}
 E_k= \left\{ m \in \Delta \cap [n_k, 2n_k) : m \equiv k \mod M(u) \right\} 
\end{equation*}

Define $F: \mathbb{Z} \rightarrow \mathbb{R}$ by
\begin{equation*}
 F(n) = \left\{ \begin{array}{l}  \phi^{-1}(u^3)  , \quad  \mbox{if } n= 0 \mbox{ mod } M(u) \\ 0 , \quad  \mbox{ otherwise }  \end{array} \right.
\end{equation*}

Note that $F(n+m) = \phi^{-1}(u^3)$ for all $m \in E_k$.  If we let $\Phi(x) = \frac{|x|^q}{\phi(|x|)}$, we also have
\begin{align*}
 D\left( \Phi\left( F \right) \right) &=   \limsup_{N \rightarrow \infty} \frac{1}{2N+1} \sum_{n = -N}^N \Phi \left( F(n) \right)\\
&=  \limsup_{N \rightarrow \infty} \frac{1}{2N+1} \sum_{\substack{-N \leq n \leq N \\ n \equiv 0 \mbox{ mod } M(u) }} \frac{\left(\phi^{-1}(u^3)\right)^q}{u^3}\\ 
&=  \limsup_{N \rightarrow \infty}   \frac{\left| \left\{-N \leq n \leq N : \, n \equiv 0 \mbox{ mod } M(u) \right\} \right|}{2N+1} \cdot \frac{\left(\phi^{-1}(u^3)\right)^q}{u^3}  \\
&= \frac{1}{M(u)} \cdot \frac{\left(\phi^{-1}(u^3)\right)^q}{u^3} \\
&\leq 1.
\end{align*}
 We will now apply Lemma \ref{badseq}.

Fix $u$ large enough so that 
\begin{equation*}
 |\Delta (2n_k)| = |S(2n_k)| +  |\Delta(2n_k) \setminus S(2n_k)|  \leq 3|S(n_k)|  + |\Delta(2n_k) \setminus S(2n_k)| \leq 4|S(n_k)|.
\end{equation*}
We may then estimate 
\begin{align*}
 \max_{k \in A_u} \frac{1}{|\Delta(2N_k)|} \sum_{m \in \Delta(2N_k)} F(n+m) &\geq  \frac{1}{4|S(N_k)|} \sum_{m \in E_k} F(n+m) \\
&= \frac{1}{4|S(N_k)|} \sum_{m \in E_k} \phi^{-1}(u^3).
\end{align*}
By our construction of $\Delta$, however, there must be at least $ R(u)|S(N_k)|$ elements in $E_k$. So
\begin{align*}
 \frac{1}{4|S(N_k)|} \sum_{m \in E_k} \phi^{-1}(u^3)
&\geq \frac{1}{4|S(N_k)|} \cdot R(u)|S(N_k)| \cdot \phi^{-1}(u^3)\\
&= \frac{u^\frac{1}{q}}{4}.
\end{align*} 
By Lemma \ref{badseq}, then, we have that $\Delta$ is universally $\infty$-sweeping out in $\frac{L^q}{\phi \left( L \right)}$.

We will now show that $\Delta$ is universally $L^q$-good.
Let $(X,\mathcal{B}, \mu, T)$ be a measure-preserving system.  Since $\frac{|\Delta(n)|}{|S(n)|} \rightarrow 1$, we need only show that for all $f \in L \phi (L)$,
\begin{equation*}
 \frac{1}{|S(n)|} \sum_{m \in \Delta(n)} f(T^mx)
\end{equation*}
converges almost everywhere.\\
We may assume $f \geq 0$.  Since $S$ is already universally $\frac{L^q}{\phi \left( L \right)}$ (and hence $L^q$) good, and since
\begin{equation*}
 \frac{1}{|S(n)|} \sum_{m \in \Delta(n)} f(T^mx) = \frac{1}{|S(n)|} \left\{ \sum_{m \in S(n)} f(T^mx) + \sum_{m \in \Delta(n) \setminus S} f(T^mx) \right\},
\end{equation*}
it will suffice if we show that for any $f \in L^q$
\begin{equation} \label{extragozero}
  \limsup \frac{1}{|S(n)|} \sum_{m \in \Delta(n) \setminus S} f(T^mx) = 0
\end{equation}

For arbitrary $n$, there exist $k$ and $u$ so that $n_k \leq n < n_{k+1}$. Since the  elements in $(\Delta \setminus S) \cap [n_k, n_{k+1})$ are contained in  $\Delta(2n_k) \setminus S$, we have 

\begin{equation*}
 \frac{1}{|S(n)|} \sum_{m \in \Delta(n) \setminus S} f(T^mx) \leq \frac{1}{|S(n_k)|} \sum_{m \in \Delta(2n_k) \setminus S} f(T^mx).
\end{equation*}

So we will have (\ref{extragozero}) if 
\begin{equation*}
 \int_X  \sum_{k=1} \left( \frac{1}{|S(n_k)|} \sum_{m \in \Delta(2n_k) \setminus S} f(T^mx) \right)^q  \, dm< \infty
\end{equation*}
for all $f \in L^q$.\\

Passing the integral inside the sum, and applying the triangle inequality,
\begin{align*}
 \int_X  &\sum_{k=1}^\infty \left( \frac{1}{|S(n_k)|} \sum_{m \in \Delta(2n_k) \setminus S} f(T^mx) \right)^q  \, dm \\
&= \sum_{k=1}^\infty \left\|  \frac{1}{|S(n_k)|} \sum_{m \in \Delta(2n_k) \setminus S} f(T^mx) \right\|_{L^q}^q\\
&\leq \left\| f \right\|_{L^q}^q \, \sum_{k=1}^\infty \left( \frac{1}{|S(n_k)|} \sum_{m \in \Delta(2n_k) \setminus S} 1 \right)^q.
\end{align*}
But
\begin{align*}
 \left\| f \right\|_{L^q}^q \, &\sum_{k=1}^\infty \left( \frac{1}{|S(n_k)|} \sum_{m \in \Delta(2n_k) \setminus S} 1 \right)^q \\
&=  \left\| f \right\|_{L^q}^q \, \sum_{u=1}^\infty \sum_{k \in A_u} \left( \frac{1}{|S(n_k)|} \sum_{m \in \Delta(2n_k) \setminus S} 1 \right)^q.
\end{align*}
Recalling inequality (\ref{allless}),
\begin{align*}
\left\| f \right\|_{L^q}^q \, &\sum_{u=1}^\infty \sum_{k \in A_u} \left( \frac{1}{|S(n_k)|} \sum_{m \in \Delta(2n_k) \setminus S} 1 \right)^q \\
&< \left\| f \right\|_{L^q}^q \, \sum_{u=1}^\infty \sum_{k \in A_u} \left( \frac{2 R(u) |S(n_k)|}{|S(n_k)|}  1 \right)^q\\
&= 2^q \left\| f \right\|_{L^q}^q \, \sum_{u=1}^\infty M(u)\left( R(u) \right)^q.
\end{align*}
Since $\phi^{-1}(x) >> x^\frac{1}{q}$, we have
\begin{align*}
 \sum_{u=1}^\infty M(u)\left( R(u) \right)^q &= \sum_{u=1}^\infty \left( \left\lfloor \frac{\left\{ \phi^{-1}(u^3) \right\}^q}{u^3} \right\rfloor +1 \right) \cdot \frac{u}{\left( \phi^{-1} (u^3) \right)^q}\\
&\leq \sum_{u=1}^\infty \frac{1}{u^2} + \sum_{u=1}^\infty \frac{u}{\left( \phi^{-1} (u^3) \right)^q}\\
&< \sum_{u=1}^\infty \frac{1}{u^2} + C \sum_{u=1}^\infty \frac{1}{u^2}
\end{align*}
for some constant $C$.
\end{proof}

\section{Proofs of Theorem \ref{big} and Lemma \ref{notasgood}}
\begin{proof}[Proof of Theorem \ref{big}]

We will consider only $\phi(x) \leq (\log x)^2$.  If $\phi(x) << \psi(x)$, then we have
$L \psi(L) \subset L \phi(L)$.  In order to construct a sequence that is good for $L \log^5 L$, for example, but bad for $L^1$, we need only construct an $L^1$-bad perturbation that is good for $L \log^2(L)$; this sequence will remain good for $L \log^5(L)$.

Let $g(u)= \log \phi^{-1}(u^{4})$. Since $\phi(x) \leq (\log x)^2$, we have that $g(u) \geq u^2$.  
The construction of our perturbation and proof that $\Delta$ is universally $\infty$-sweeping out in $L^1$ proceeds exactly as in the proof of Theorem \ref{prebig}, with
\begin{align*}
 M(u) &= \left\lfloor 2^{g(u)} \right\rfloor, \\ 
 R(u) &= \frac{u^\frac{1}{2}}{2^{g(u)}}, \mbox{ and} \\
 F(n) &= \left\{ \begin{array}{l}  2^{g(u)}  , \quad  \mbox{if } n= 0 \mbox{ mod } M(u) \\ 0 , \quad  \mbox{ otherwise. }  \end{array} \right.
\end{align*}

It remains to show that $\Delta$ is universally $L \phi(L)$-good.

Again we find that it suffices to show that
\begin{equation} \label{extragozero1}
  \limsup \frac{1}{|S(n)|} \sum_{m \in \Delta(n) \setminus S} f(T^mx) = 0
\end{equation}
for every $f \in L \phi(L)$.  Since for arbitrary $n$, there exist $k$ and $u$ so that $n_k \leq n < n_{k+1}$, we have
\begin{equation*}
 \frac{1}{|S(n)|} \sum_{m \in \Delta(n) \setminus S} f(T^mx) \leq \frac{1}{|S(n_k)|} \sum_{m \in \Delta(2n_k) \setminus S} f(T^mx).
\end{equation*}
So we need only show that 
\begin{equation*}
\int_X \left\{ \sum_{k=1}^\infty \left( \frac{1}{|S(n_k)|} \sum_{m \in \Delta(2n_k) \setminus S} f(T^mx) \right)^2 \right\}^\frac{1}{2} \, d\mu(x) < \infty.
\end{equation*}

This follows from Theorem \ref{Yanos} so long as 
\begin{equation*}
 \left\| \left\{ \sum_{k=1}^\infty \left( \frac{1}{|S(n_k)|} \sum_{m \in \Delta(2n_k) \setminus S} f(T^mx) \right)^{2} \right\}^\frac{1}{2} \right\|_{L^p} \leq  \phi \left( e^\frac{1}{p-1} \right)  \left\| f \right\|_{L^p} ,
\end{equation*}
for all $p$, $1< p \leq 2$.

Now,
\begin{align*}
 &\left\| \left\{ \sum_{k=1}^\infty \left( \frac{1}{|S(n_k)|} \sum_{m \in \Delta(2n_k) \setminus S} f(T^mx) \right)^{2} \right\}^\frac{1}{2}  \right\|_{L^p}\\ %1%
	&= \left\{ \int_X \left( \sum_{k=1}^\infty \left( \frac{1}{|S(n_k)|} \sum_{m \in \Delta(2n_k) \setminus S} f(T^mx) \right)^{2} \right)^\frac{p}{2} \, d\mu(x) \right\}^\frac{1}{p}. %2%
\end{align*}
Our first goal is to move the integral inside the first sum.  Noting that the $l^\frac{2}{p}$ norm is less than the $l^1$ norm,  
\begin{align}
&\left\{ \int_X  \left( \sum_{k=1}^\infty \left( \frac{1}{|S(n_k)|} \sum_{m \in \Delta(2n_k) \setminus S} f(T^mx) \right)^{2} \right)^\frac{p}{2} \, d\mu(x) \right\}^\frac{1}{p} \notag\\ 
&< \left\{ \int_X \sum_{k=1}^{\infty} \left( \frac{1}{|S(n_k)|} \sum_{m \in \Delta(2n_k) \setminus S} f(T^mx) \right)^p \, d\mu(x) \right\}^\frac{1}{p} \notag\\%3% 
&\leq \left\{  \sum_{k=1}^{\infty} \int_X \left( \frac{1}{|S(n_k)|} \sum_{m \in \Delta(2n_k) \setminus S} f(T^mx) \right)^p \, d\mu(x) \right\}^\frac{1}{p} \notag\\ %4%
&= \left\{  \sum_{k=1}^{\infty} \left\|  \frac{1}{|S(n_k)|} \sum_{m \in \Delta(2n_k) \setminus S} f(T^mx) \right\|_{L^p}^p \, \right\}^\frac{1}{p} \label{Lpin}%4.5%
\end{align}
Applying the triangle inequality as in the proof of Theorem \ref{prebig}, (\ref{Lpin}) is less than
\begin{equation*}
 \left\| f \right\|_{L^p} \left\{  \sum_{k=1}^{\infty} \left( \frac{1}{|S(n_k)|} \sum_{m \in \Delta(2n_k) \setminus S} 1 \right)^p \, \right\}^\frac{1}{p}.
\end{equation*}
Breaking up the sum over $k$, we have
\begin{align*}
 &\left\| f \right\|_{L^p}  \left\{  \sum_{k=1}^{\infty} \left( \frac{1}{|S(n_k)|} \sum_{m \in \Delta(2n_k) \setminus S} 1 \right)^p \, \right\}^\frac{1}{p} \\%6%
&= \left\| f \right\|_{L^p} \left\{ \sum_{u=1}^{\infty} \sum_{k \in A_u} \left( \frac{1}{|S(n_k)|} \sum_{m \in \Delta(2n_k) \setminus S} 1 \right)^p \, \right\}^\frac{1}{p} \\%7%
&\leq \left\| f \right\|_{L^p} \left\{ \sum_{u=1}^{\infty} 2^{g(u)} \left( \frac{1}{|S(n_k)|} \left\{ \frac{u^\frac{1}{2}}{2^{g(u)}} \cdot 2|S(n_k)| \right\} \right)^p \, \right\}^\frac{1}{p} \\%8% 
&= 2 \left\| f \right\|_{L^p} \left\{ \sum_{u=1}^\infty \frac{u^\frac{p}{2}}{2^{g(u) \cdot (p-1)}} \right\}^\frac{1}{p}\\%9%
&\leq 2 \left\| f \right\|_{L^p} \left\{ \sum_{u=1}^\infty \frac{u}{2^{g(u) \cdot (p-1)}} \right\}^\frac{1}{p}.%10%
\end{align*}

We now wish to show that
\begin{equation*}
 \sum_{u=1}^\infty \frac{u}{2^{g(u) \cdot (p-1)}} \leq \phi\left( e^\frac{1}{p-1} \right).
\end{equation*}
It is in the course of providing an upper estimate for this sum that we make use of our requirement that  $\phi(x) \leq (\log x)^2$.  In the search for this upper estimate, we will consider two separate cases.  

First, suppose $g \sim u^n$ for some real number $n$.  Then there are there are nonzero constants $c$ and $C$ so that $cu^n < g(u) \leq Cu^n$. 
 Let 
\begin{align*}
 \alpha &= \frac{n}{n-1} \mbox{, and }\\
 N&= g^{-1}\left( \frac{1}{p-1} \right).
\end{align*}

Then 
\begin{align*}
 \sum_{u=1}^\infty \frac{u}{2^{g(u) \, (p-1)}} &= \sum_{u \leq N^\alpha} \frac{u}{2^{g(u) \, (p-1)}} + \sum_{u > N^\alpha} \frac{u}{2^{g(u) \, (p-1)}} \\
&\leq N^\alpha \sum_{u \leq N^\alpha} \frac{1}{2^{g(u) \, (p-1)}} + \sum_{u > N^\alpha} \frac{u}{2^{g(u) \, (p-1)}} \\
&\leq N^{2\alpha} + \sum_{u > N^\alpha} \frac{u}{2^{g(u) \, (p-1)}}.
\end{align*}

We claim that the second sum above is bounded by a constant.
If $u > N^\alpha$, then we have
\begin{align*}
 g(u)\, (p-1) &> cu^n (p-1) \\
&>c u N^{\alpha(n-1)} (p-1)\\
&=c u N^n (p-1)\\
&\geq \frac{c}{C} u g(N) (p-1)\\
&= \frac{c}{C} u.
\end{align*}

Thus there is a nonzero constant $K$, dependent on $g$ but independent of $p$, so that 
\begin{equation*}
 \sum_{u > N^\alpha} \frac{u}{2^{g(u) \, (p-1)}} \leq \sum_{r = 1}^\infty \frac{r}{2^{Kr}}.
\end{equation*}
This series is convergent regardless of what $K$ is, so the entire sum
\begin{equation*}
  \sum_{u=1}^\infty \frac{u}{2^{g(u) \, (p-1)}} \leq N^{2\alpha} + A,
\end{equation*}
where $A$ is some constant dependent only on $g$. Since $g(u) \geq u^2$, we have that $\alpha \leq 2$.  So, if $g \sim u^n$, we have
 \begin{align*}
  \sum_{u=1}^\infty \frac{u}{2^{g(u) \, (p-1)}} &\leq N^4 + A \\
&= \left( g^{-1} \left( \frac{1}{p-1} \right) \right)^4 + A \\
&= \phi \left( e^\frac{1}{p-1} \right) + A.
  \end{align*}

Now suppose $g >> u^n$ for all $n$.  Defining $\alpha$ and $N$ as before, we have
\begin{equation*}
 \sum_{u=1}^\infty \frac{u}{2^{g(u) \, (p-1)}} \leq N^{2\alpha} + \sum_{u > N^\alpha} \frac{u}{2^{u^n \, (p-1)}} \leq N^{2\alpha} + A,
\end{equation*}
where $A$ is again independent of $p$.  Letting $n \rightarrow \infty$, we have
\begin{align*}
 \sum_{u=1}^\infty \frac{u}{2^{g(u) \, (p-1)}}  &\leq N^2 + A \\
 &= \left( g^{-1} \left( \frac{1}{p-1} \right) \right)^2 + A \\
 &= \left( \phi \left( e^\frac{1}{p-1} \right) \right)^\frac{1}{2} +A.
 \end{align*}
\end{proof}

\begin{proof}[Proof of Lemma \ref{notasgood}]
 In this Lemma, we construct our perturbation by letting
\begin{align*}
 M(u) &= \left\lfloor 2^{g(u)} \psi \left( 2^{g(u)}\right)\right\rfloor, \\ 
 R(u) &= \frac{\left(\frac{u^k}{\psi \left( 2^{g(u)}\right)}\right)^\frac{1}{2}}{2^{g(u)}}, \mbox{ and} \\
 F(u) &= \left\{ \begin{array}{l}  2^{g(u)}  , \quad  \mbox{if } n= 0 \mbox{ mod } M(u) \\ 0 , \quad  \mbox{ otherwise. }  \end{array} \right.
\end{align*}
where $g(u)= \log \phi^{-1}\left( u^{k+1}\right)$.  
The proof that $\Delta$ is a perturbation proceeds as before.  We will need the requirement that $\frac{[\phi(u)]^\frac{k}{k+1}}{\psi(u)} \rightarrow \infty$ to show that $\Delta$ is universally $\infty$-sweeping out in $L \psi (L)$.

In order to show that the perturbation is bad for $L \psi(L)$, we will once again seek to apply Lemma \ref{badseq}.

With $\Psi(x) = |x| \, \psi(|x|)$, we have
\begin{align*}
 D\left( \Psi \left( F \right) \right) &=  \limsup_{N \rightarrow \infty} \frac{1}{2N+1} \sum_{n = -N}^N \Phi \left( F(n) \right)\\
&=  \limsup_{N \rightarrow \infty} \frac{1}{2N+1} \sum_{\substack{-N \leq n \leq N \\ n \equiv 0 \mbox{ mod } M(u) }} 2^{g(u)} \psi \left( 2^{g(u)}\right) \\ 
&=  \limsup_{N \rightarrow \infty}   \frac{\left| \left\{-N \leq n \leq N : \, n \equiv 0 \mbox{ mod } M(u) \right\} \right|}{2N+1} \cdot 2^{g(u)} \psi \left( 2^{g(u)}\right)  \\
&= \frac{1}{M(u)} \cdot 2^{g(u)} \psi \left( 2^{g(u)}\right) \\
&\leq 1.
\end{align*}

Once more, fix $u$ large enough so that 
\begin{equation*}
 |\Delta (2n_k)| \leq 4|S(n_k)|.
\end{equation*}
We may then estimate 
\begin{align*}
 \max_{k \in A_u} \frac{1}{|\Delta(2N_k)|} \sum_{m \in \Delta(2N_k)} F(n+m) &\geq  \frac{1}{4|S(N_k)|} \sum_{m \in E_k} F(n+m) \\
&= \frac{1}{4|S(N_k)|} \sum_{m \in E_k} 2^{g(u)}.
\end{align*}
As before, there must be at least $ R(u)|S(N_k)|$ elements in $E_k$. So
\begin{align*}
 \frac{1}{4|S(N_k)|} \sum_{m \in E_k} 2^{g(u)}
&\geq \frac{1}{4|S(N_k)|} \cdot R(u)|S(N_k)| \cdot 2^{g(u)}\\
&= \frac{1}{4} \cdot \left( \frac{u^k}{\psi \left( 2^{g(u)} \right)} \right)^\frac{1}{2}.
\end{align*} 
If $u^k >> \psi \left( 2^{g(u)} \right)$ we will then have that $\Delta$ is universally $\infty$-sweeping out in $L \psi (L)$ by Lemma \ref{badseq}.  

But $u^k >> \psi \left( 2^{g(u)} \right)$ whenever $\psi^{-1} \left( u^k \right) >> \phi^{-1} \left( u^{k+1} \right)$.  Since $\frac{[\phi(u)]^\frac{k}{k+1}}{\psi(u)} \rightarrow \infty$, we have that $ \phi^k >> \psi^{k+1}$.  So by Lemma \ref{badseq}, $\Delta$ is universally $\infty$-sweeping out in $L \psi (L)$.

As in Theorem \ref{big}, we will prove that our perturbation $\Delta$ remains good for $L \phi (L)$ by showing that 
\begin{equation} 
  \limsup \frac{1}{|S(n)|} \sum_{m \in \Delta(n) \setminus S} f(T^mx) = 0. \label{again}
\end{equation}
We will show that 
\begin{equation*}
 \left\| \left\{ \sum_{k=1}^\infty \left( \frac{1}{|S(n_k)|} \sum_{m \in \Delta(2n_k) \setminus S} f(T^mx) \right)^{2} \right\}^\frac{1}{2} \right\|_{L^p} \leq  \phi \left( e^\frac{1}{p-1} \right)  \left\| f \right\|_{L^p} ,
\end{equation*}
for all $p$, $1 < p \leq 2$, arriving at (\ref{again}) through the extrapolation theorem.
\newline

Proceeding as in the previous proof, we find that
\begin{align*}
 &\left\| \left\{ \sum_{k=1}^\infty \left( \frac{1}{|S(n_k)|} \sum_{m \in \Delta(2n_k) \setminus S} f(T^mx) \right)^{2} \right\}^\frac{1}{2} \right\|_{L^p} \\
&\leq \left\| f \right\|_{L^p} \left\{  \sum_{k=1}^{\infty} \left( \frac{1}{|S(n_k)|} \sum_{m \in \Delta(2n_k) \setminus S} 1 \right)^p \, \right\}^\frac{1}{p} \notag \\
&= \left\| f \right\|_{L^p} \left\{ \sum_{u=1}^{\infty} \sum_{k \in A_u} \left( \frac{1}{|S(n_k)|} \sum_{m \in \Delta(2n_k) \setminus S} 1 \right)^p \, \right\}^\frac{1}{p}\\
&< \left\| f \right\|_{L^p}  \sum_{u=1}^{\infty} M(u) \left( 
\frac{1}{S(n_k)} \cdot 2R(u)\,S(n_k) \right)^p.
\end{align*}

But, 
\begin{align*}
  \sum_{u=1}^{\infty} M(u) \left( \frac{1}{S(n_k)} \cdot 2R(u)\,S(n_k) \right)^p
&\leq 4 \sum_{u=1}^{\infty}  2^{g(u)} \psi \left( 2^{g(u)}\right) \cdot \frac{\left(\frac{u^k}{\psi \left( 2^{g(u)}\right)}\right)^\frac{p}{2}}{2^{g(u)\,p}}\\
&\leq 4 \sum_{u=1}^{\infty} \frac{u^k}{2^{g(u)\,(p-1)}}.
\end{align*}

Because $\phi(x) << \log^j (x)$ for all $j$, we have that $g(u) >> u^\frac{k+1}{j}$
Letting $n = \frac{1}{j}$ and defining $N$ and $\alpha$ as before, we have that 
\begin{align*}
 \sum_{u=1}^{\infty} \frac{u^k}{2^{g(u)\,(p-1)}} &= \sum_{u \leq N^\alpha} \frac{u^k}{2^{g(u) \, (p-1)}} + \sum_{u > N^\alpha} \frac{u^k}{2^{g(u) \, (p-1)}} \\
&\leq N^{(k+1)\alpha}  + \sum_{u > N^\alpha} \frac{u^k}{2^{u^{kn} \, (p-1)}} \\
&\leq N^{(k+1)\alpha} + A,
\end{align*}
where $A$ is again a constant independent of $p$. 

Letting $n \rightarrow \infty$, we have $\alpha \rightarrow 1$, and
\begin{align*}
 \sum_{u=1}^{\infty} \frac{u^k}{2^{g(u)\,(p-1)}} &\leq N^{(k+1)} + A \\
&= \phi \left( e^\frac{1}{p-1} \right) + A.
\end{align*}

.
\end{proof}

\section{Questions}
In Lemma \ref{notasgood}, the requirement that $\phi^k >> \psi^{(k+1)}$ is a product of our method of overestimating the sum.  Likewise, note that if $g(u) \sim u^n$ for some $n$- that is, if $\phi(x) \geq C\log^j(x)$ for some $j$- the method above requires that  $\frac{[\phi(u)]^\frac{k}{2(k+1)}}{\psi(u)} \rightarrow \infty$ to achieve the result. The question of whether one can construct a sequence that is good for a particular Orlicz space but bad for any larger Orlicz space remains.  

Since Theorem \ref{big} shows that we can construct a sequence that is good for a fixed Orlicz space, but bad for $L^1$, one might ask whether we can construct a sequence good for all Orlicz spaces but bad for $L^1$.  As it turns out, there is no such sequence; $L^1$ is the union of all Orlicz spaces properly contained in it (see \cite{RaoRen}).  Since there is a sequence that is universally $L \log \log (L)$-good but universally $\infty$-sweeping out for $L^1$, there is a sequence universally $L \log^s(L)$-good, for all $s$, but universally $\infty$-sweeping out in $L^1$. What other families of functions have this property?

Given a family of functions $\{ \phi_\alpha \}_{\alpha \in A}$, we may construct a sequence that is universally good for $L \phi_\alpha (L)$ but universally $\infty$-sweeping out so long as there is an unbounded function meeting the requirements of Theorem \ref{big} that grows more slowly than any $\phi_\alpha$. In this manner, we can construct a sequence universally $L \phi(L)$-good for all $\phi$ where $\phi$ is one of Hardy's logarithmico-exponential functions by letting our slower function be
\begin{equation*}
 f(x)= 1 \chi_{[0,1)} + \left(1 + \log x \right)\chi_{[1,2)} + \left(1 + \log(2) + \log \log x\right)\chi_{[1,4)} + ...
\end{equation*}
The question remains, however, for larger families, such as functions in the intersection of all maximal Hardy Fields.

\bibliographystyle{amsplain}

\bibliography{paperbib}
\end{document}